\crefname{construction}{Construction}{Constructions}
\crefname{discussion}{}{}
\newtheorem*{rep@theorem}{\rep@title}
\newcommand{\newreptheorem}[2]{%
\newenvironment{rep#1}[1]{%
 \def\rep@title{#2 \ref{##1}}%
 \begin{rep@theorem}}%
 {\end{rep@theorem}}}
\newtheorem{theorem}[subsection]{Theorem}
\newtheorem{proposition}[subsection]{Proposition}
\newtheorem{lemma}[subsection]{Lemma}
\newtheorem{corollary}[subsection]{Corollary}
\theoremstyle{definition}
\newtheorem{definition}[subsection]{Definition}
\newtheorem{discussion}[subsection]{}
\theoremstyle{remark}
\newtheorem{facts}[subsection]{Facts}
\numberwithin{equation}{subsection}
\renewcommand{\epsilon}{\varepsilon}
\renewcommand{\phi}{\varphi}
\renewcommand{\theta}{\vartheta}
\renewcommand{\mod}[1]{\operatorname{mod}(#1)}
\newcommand{\Mod}[1]{\operatorname{Mod}(#1)}
\newcommand{\grmod}[1]{\operatorname{grmod}(#1)}
\newcommand{\grMod}[1]{\operatorname{grMod}(#1)}
\newcommand{\dcat}[2][]{\operatorname{D}_{#1}(#2)}
\newcommand{\dbcat}[1]{\dcat[b]{#1}}
\newcommand{\cat}[1]{{\mathsf{#1}}}
\newcommand{\op}[1]{#1^{op}}
\newcommand{\set}[2]{\left\{#1 \,\middle|\, #2\right\}}
\newcommand{\BZ}{\mathbb{Z}}
\newcommand{\bmchi}{\bm{\chi}}
\newcommand{\bmf}{\bm{f}}
\DeclareMathOperator{\coh}{H}
\DeclareMathOperator{\colim}{colim}
\DeclareMathOperator{\ctr}{Z}
\DeclareMathOperator{\End}{End}
\newcommand{\Ext}[4][*]{\operatorname{Ext}^{#1}_{#2}(#3,#4)}
\newcommand{\hh}[2][*]{\operatorname{HH}^{#1}(#2)}
\newcommand{\Hom}[4][]{\operatorname{Hom}^{#1}_{#2}(#3,#4)}
\DeclareMathOperator{\id}{id}
\newcommand{\lotimes}{\otimes^{\mathbf{L}}}
\DeclareMathOperator{\Nat}{Nat}
\newcommand{\susp}{\Sigma}
\DeclareMathOperator{\thick}{thick}
\title[Graded Brown representability]{Brown representability for triangulated categories with a linear action by a graded ring}
\date{\today}
\author[J.~C.~Letz]{Janina C.~Letz}
\address{Janina C. Letz\\
Fakult\"at f\"ur Mathematik\\ 
Universit\"at Bielefeld\\ 
33501 Bielefeld\\ 
Germany.}
\email{jletz@math.uni-bielefeld.de}
\keywords{Brown representability, representable functor, triangulated category, Ext-finite, graded ring}
\subjclass[2020]{18G80}
\thanks{Partly supported by the NSF grant DMS-1700985, and the Alexander von Humboldt Foundation in the framework of an Alexander von Humboldt Professorship endowed by the German Federal Ministry of Education and Research.}
\begin{document}

\begin{abstract}
In this paper we give necessary and sufficient conditions for a functor to be representable in a strongly generated triangulated category which has a linear action by a graded ring, and we discuss some applications and examples. 
\end{abstract}

\maketitle

\section{Introduction}

Every object $X$ in a category induces a contravariant functor into the category of sets that sends an object $Y$ to the set $\Hom{}{Y}{X}$. Any functor that is naturally isomorphic to such a functor is called \emph{representable}. There are a number of results in various settings, called Brown representability, when every `reasonable' functor is representable. The first such result is due to Brown; see \cite{Brown:1962}.

The first Brown representability result for triangulated categories was established by Neeman \cite[Theorem 3.1]{Neeman:1996}. The work on hand was motivated by \cite[Theorem~1.3]{Bondal/VanDenBergh:2003} and \cite[4.3]{Rouquier:2008}.

\begin{reptheorem}{grddRep}
Let $R$ be a $\BZ$-graded graded-commutative noetherian ring and $\cat{T}$ a graded $R$-linear triangulated category, that is strongly generated, Ext-finite and idempotent complete. Then a graded $R$-linear cohomological functor $\mathsf{f} \colon \op{\cat{T}} \to \grMod{R}$ is graded representable if and only if $\mathsf{f}$ only takes values in $\grmod{R}$.
\end{reptheorem}

In contrast to the previous works, we characterize the \emph{graded representable} functors; those are the functors naturally isomorphic to
\begin{equation*}
\coprod_{d \in \BZ} \Hom{\cat{T}}{-}{\susp^d X}
\end{equation*}
for some object $X$. The result is proved in \cref{sec:Representability}. Without the assumption that $R$ is noetherian and $\cat{T}$ Ext-finite, we obtain necessary, through not sufficient, conditions for a functor to be graded representable; see \cref{lfpIdemCompGrddRpn}. 

The study of representability is motivated by the fact, that the characterization of representable functors in a triangulated category $\cat{T}$ yields the existence of a right adjoint functor to a functor $\cat{S} \to \cat{T}$. For a nice discussion on this see \cite[Introduction]{Neeman:2001}. In \cref{sec:Applications} we show the same holds in the graded setting. 

Finally we discuss some examples where \cref{grddRep} yields new insight: When $G$ is a finite group and $R$ a commutative noetherian ring, then $\dbcat{\mod{RG}}$ is Ext-finite as a $\coh^*(G,R)$-linear category. In the second example we consider the action of Hochschild cohomology $\hh[*]{R/Q}$ on $\dbcat{\mod{R}}$, when $Q$ is a regular ring and $R = Q/(\bmf)$ a quotient by a regular sequence. 

\section{Representable Functors in the Graded Setting} \label{sec:Representability}

Let $\cat{T}$ be a triangulated category with suspension functor $\susp$. 

\begin{discussion}
For objects $X$ and $Y$ in $\cat{T}$ we write
\begin{equation} \label{DefExtTriangulated}
\Ext{\cat{T}}{X}{Y} \colonequals \coprod_{d \in \BZ} \Hom{\cat{T}}{X}{\susp^d Y}\,.
\end{equation}
When $\cat{T} = \dcat{R}$, the derived category of modules over a ring $R$, and $X$ and $Y$ are $R$-modules viewed as objects in $\dcat{R}$ via the natural embedding, then this coincides with the classical Ext-groups.
\end{discussion}

\begin{discussion}
Let $R$ be a $\BZ$-graded graded-commutative ring. This means $R$ decomposes as
\begin{equation*}
R = \coprod_{d \in \BZ} R_d\,,
\end{equation*}
and the Koszul sign rule holds
\begin{equation*}
rs = (-1)^{de} sr \quad\text{for } r \in R_d \text{ and }s \in R_e\,.
\end{equation*}
We say $r \in R_d$ is an homogeneous element of degree $d$. 
\end{discussion}

\begin{discussion}
A triangulated category $\cat{T}$ is \emph{graded $R$-linear} if
\begin{enumerate}
\item for any objects $X$ and $Y$ in $\cat{T}$, the abelian group $\Ext{\cat{T}}{X}{Y}$ is a graded $R$-module with the grading given by the coproduct in \cref{DefExtTriangulated}, and
\item composition is $R$-bilinear. 
\end{enumerate}

This data is equivalent to a ring homomorphism $R \to \ctr(\cat{T})$, where
\begin{equation*}
\ctr(\cat{T}) \colonequals \coprod_{d \in \BZ} \set{\eta \colon \id_\cat{T} \to \susp^d}{\eta \susp = (-1)^d \susp \eta}
\end{equation*}
is the \emph{graded center of $\cat{T}$}. More precisely, a ring homomorphism $\varphi \colon R \to \ctr(\cat{T})$ yields an $R$-action on $\Ext{\cat{T}}{X}{Y}$ via
\begin{gather*}
r \cdot - \colon \Ext{\cat{T}}{X}{Y} \to \Ext{\cat{T}}{X}{\susp^d Y} = \Ext{\cat{T}}{X}{Y}[d] \,, \\
f \mapsto (\susp^d f) \circ \varphi(r)_X = \varphi(r)_Y \circ f
\end{gather*}
for any homogeneous element $r \in R$. Conversely, any homogeneous element $r \in R$ yields a natural transformation $\eta$ given by
\begin{equation*}
\eta_X \colonequals r \cdot \id_X \colon X \to \susp^{|r|} X
\end{equation*}
for any $X \in \cat{T}$. It is straightforward to check, that these identifications are well-defined and mutually inverse. The graded center has been studied in a number of works; for example \cite{Benson/Iyengar/Krause:2008,Buchweitz/Flenner:2008}. 
\end{discussion}

\begin{discussion}
We denote by $\grMod{R}$ the category of graded $R$-modules, and by $\grmod{R}$ its full subcategory of finitely generated $R$-modules. The $n$th shift $M[n]$ of a graded $R$-module $M$ is given by $(M[n])_d = M_{n+d}$.

The suspension functor of a graded $R$-linear category $\cat{T}$ in the first component of $\Ext{\cat{T}}{-}{-}$ corresponds to the negative shift in $\grMod{R}$: 
\begin{equation*}
\Ext{\cat{T}}{\susp^n X}{Y} \cong \Ext{\cat{T}}{X}{Y}[-n]\,.
\end{equation*}
\end{discussion}

\begin{discussion}
A functor $\mathsf{f} \colon \op{\cat{T}} \to \grMod{R}$ is \emph{graded $R$-linear}, if
\begin{enumerate}
\item the induced map $\Ext{\cat{T}}{X}{Y} \to \Ext{R}{\mathsf{f}(Y)}{\mathsf{f}(X)}$ is a map of graded $R$-modules, and
\item the suspension becomes the negative shift under $\mathsf{f}$, that is
\begin{equation*}
\mathsf{f}(\susp^n X) = \mathsf{f}(X)[-n]\,.
\end{equation*}
\end{enumerate}
The functor $\mathsf{f}$ is \emph{cohomological} if $\mathsf{f}$ applied to any exact triangle yields a long exact sequence of graded $R$-modules. 

Without explicitly stating we always assume that a natural transformation between graded $R$-linear functors respects this structure. 
\end{discussion}

\begin{definition}
A functor $\op{\cat{T}} \to \grMod{R}$ is \emph{graded representable} if it is naturally isomorphic to
\begin{equation*}
\mathsf{g}_X \colonequals \Ext{\cat{T}}{-}{X} \colon \op{\cat{T}} \to \grMod{R}
\end{equation*}
for some object $X$ in $\cat{T}$.
\end{definition}

When $\cat{T}$ is graded $R$-linear, then any graded representable functor is graded $R$-linear. 

A graded $R$-linear functor $\mathsf{f} \colon \op{\cat{T}} \to \grMod{R}$ is graded representable if and only if $\mathsf{f}_d \colon \op{\cat{T}} \to \Mod{R_0}$ is representable for an(y) arbitrary integer $d$. The functors $\mathsf{f}_d$ are the degree $d$ part of $\mathsf{f}$, that is $\mathsf{f}_d(X) \colonequals \mathsf{f}(X)_d$. Since $\mathsf{f}$ is graded $R$-linear, the degree $d$ part $\mathsf{f}_d$ for an integer $d$ encodes all the information of $\mathsf{f}$, that is
\begin{equation*}
\mathsf{f}_d(\susp^e X) = \mathsf{f}(\susp^e X)_d = \mathsf{f}(X)[-e]_d = \mathsf{f}(X)_{d-e} = \mathsf{f}_{d-e}(X)\,.
\end{equation*}

\begin{theorem} \label{grddRep}
Let $R$ be a $\BZ$-graded graded-commutative noetherian ring and $\cat{T}$ a graded $R$-linear triangulated category, that is strongly generated, Ext-finite and idempotent complete. Then a graded $R$-linear cohomological functor $\mathsf{f} \colon \op{\cat{T}} \to \grMod{R}$ is graded representable if and only if $\mathsf{f}$ is locally finite.
\end{theorem}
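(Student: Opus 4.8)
I would prove this by adapting the ``saturation'' argument of \cite[Theorem~1.3]{Bondal/VanDenBergh:2003} and \cite[4.3]{Rouquier:2008} to the graded setting. The \emph{only if} direction is immediate: if $\mathsf{f} \cong \mathsf{g}_X = \Ext{\cat{T}}{-}{X}$ then $\mathsf{f}(Y) = \Ext{\cat{T}}{Y}{X}$ is a finitely generated graded $R$-module by Ext-finiteness, so $\mathsf{f}$ is locally finite. For the converse, fix a strong generator $G$, so that $\cat{T} = \langle G \rangle_n$ for some $n$ --- every object of $\cat{T}$ is built from $G$ by at most $n$ steps of cones, shifts, finite coproducts and summands; idempotent completeness guarantees such an $n$ exists. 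It suffices to produce an object $Y$ and a natural transformation $\alpha \colon \mathsf{g}_Y \to \mathsf{f}$ inducing an isomorphism $\Ext{\cat{T}}{G}{Y} \xrightarrow{\sim} \mathsf{f}(G)$ of graded $R$-modules. Indeed, since $\mathsf{g}_Y$ and $\mathsf{f}$ are graded $R$-linear and additive, $\alpha$ is then an isomorphism on every $\susp^d G$, hence on finite coproducts of shifts of $G$ and their summands, i.e.\ on $\langle G\rangle_1$; and a five-lemma induction along the triangles witnessing membership in $\langle G\rangle_{k+1}$ --- in each of which all vertices but one lie in the shift-closed subcategory $\langle G\rangle_k$, where $\alpha$ is already an isomorphism, while $\mathsf{g}_Y$ and $\mathsf{f}$ are cohomological --- propagates this through $\langle G\rangle_n = \cat{T}$, so that $\alpha$ is a natural isomorphism and $\mathsf{f}$ is graded representable.

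The object $Y$ is built by a finite iteration. Local finiteness makes $\mathsf{f}(G)$ a finitely generated graded $R$-module; choose homogeneous generators $\xi_1,\dots,\xi_m$ of degrees $d_1,\dots,d_m$ and set $Y_0 \colonequals \coprod_j \susp^{-d_j}G$. The tuple $(\xi_j)_j$ lies in $\mathsf{f}(Y_0)_0 = \prod_j \mathsf{f}(\susp^{-d_j}G)_0$, hence by the graded Yoneda lemma determines $\alpha_0 \colon \mathsf{g}_{Y_0} \to \mathsf{f}$, which is surjective on $G$ by construction. Inductively, given $\alpha_i \colon \mathsf{g}_{Y_i} \to \mathsf{f}$ surjective on $G$, the module $K_i \colonequals \ker\!\bigl(\Ext{\cat{T}}{G}{Y_i} \to \mathsf{f}(G)\bigr)$ is finitely generated over $R$ --- here Ext-finiteness and the noetherian hypothesis on $R$ both enter. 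Choose homogeneous generators $\zeta_l \colon \susp^{-e_l}G \to Y_i$ of $K_i$, assemble them into $\zeta^{(i)} \colon P_i \colonequals \coprod_l \susp^{-e_l}G \to Y_i$, and let $Y_{i+1} \colonequals \cone(\zeta^{(i)})$ with structure map $w_i \colon Y_i \to Y_{i+1}$. Applying $\mathsf{f}$ to the triangle $P_i \to Y_i \xrightarrow{w_i} Y_{i+1} \to \susp P_i$, the obstruction to lifting the Yoneda class $\phi_i \in \mathsf{f}(Y_i)_0$ of $\alpha_i$ through $\mathsf{f}(w_i)$ is its image in $\mathsf{f}(P_i)$, whose $l$-th component is the value of $\alpha_i$ at $\zeta_l \in K_i$ and so vanishes; hence $\phi_i$ lifts to some $\phi_{i+1} \in \mathsf{f}(Y_{i+1})_0$, yielding $\alpha_{i+1} \colon \mathsf{g}_{Y_{i+1}} \to \mathsf{f}$ through which $\alpha_i$ factors. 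Then $\alpha_{i+1}$ is again surjective on $G$; the $\zeta_l$ become zero in $\Ext{\cat{T}}{G}{Y_{i+1}}$ (consecutive maps in a triangle compose to $0$), so $w_{i*} \colon \Ext{\cat{T}}{G}{Y_i} \to \Ext{\cat{T}}{G}{Y_{i+1}}$ has kernel $K_i$; and a short diagram chase then gives $K_{i+1} \cap \operatorname{im}(w_{i*}) = 0$, so $K_{i+1}$ embeds via the connecting map into $\Ext{\cat{T}}{G}{\susp P_i}$.

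The crux --- and the step I expect to be the main obstacle --- is to show that this process terminates, i.e.\ that $K_i = 0$ for some $i$ (bounded in terms of $n$); then $Y \colonequals Y_i$ and $\alpha \colonequals \alpha_i$ finish the proof by the first paragraph. This is exactly where strong generation must be used, and where the finiteness of $R$ alone cannot suffice (the successive modules $\Ext{\cat{T}}{G}{\susp P_i}$ need not get smaller). One expects that, after organizing the $Y_i$ into a cell tower compatible with the filtration $\langle G\rangle_\bullet$, the residual kernel after $i$ attachments factors through $i$ successive connecting maps and is therefore forced to vanish once $i$ reaches the generation length $n$, by a d\'evissage against $\cat{T} = \langle G\rangle_n$ in the spirit of the ghost lemma. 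Making this bookkeeping precise is the delicate point; the safe route is to transcribe the arguments of \cite[Theorem~1.3]{Bondal/VanDenBergh:2003} and \cite[4.3]{Rouquier:2008} verbatim, systematically replacing ``finite dimensional over the base field'' by ``finitely generated over the noetherian graded ring $R$'': noetherianity keeps each $K_i$ finitely generated so that every attachment uses only finitely many shifts of $G$, and $\cat{T} = \langle G\rangle_n$ together with idempotent completeness forces the construction to close up inside $\cat{T}$, producing the representing object $Y$.
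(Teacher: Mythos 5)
Your overall architecture (a tower of approximations $\mathsf{g}_{Y_0}\to\mathsf{g}_{Y_1}\to\cdots$ mapping compatibly to $\mathsf{f}$, with local finiteness, noetherianity of $R$ and Ext-finiteness keeping each attachment finite, and strong generation bounding the length of the tower) matches the paper's, and your ``only if'' direction and the five-lemma reduction to the generator are both fine. But the step you yourself flag as the crux --- that $K_i=\ker(\Ext{\cat{T}}{G}{Y_i}\to\mathsf{f}(G))$ vanishes once $i$ reaches the generation length --- is a genuine gap, and not merely a bookkeeping one: it is the wrong statement to aim for. The tower does not converge to an isomorphism on $G$; each attachment kills $K_i$ (indeed $\ker(w_{i*})=K_i$), but the connecting map injects a fresh kernel $K_{i+1}\hookrightarrow\Ext{\cat{T}}{G}{\susp P_i}$, exactly as you computed, and nothing forces these to die. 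Transcribing Bondal--Van den Bergh or Rouquier ``verbatim'' would not give $K_i=0$ either: their conclusion, and the paper's (\cref{lfpiffRetractGrddRpn}, using the ``almost constant'' formalism recalled in \cref{almostConstant}), is only that the direct system is almost constant on $G$, whence $\colim_i\mathsf{g}_{A_i}\cong\mathsf{f}$ and $\mathsf{g}_{A_{d+1}}\to\mathsf{f}$ is \emph{split surjective} on $\thick^d(G)=\cat{T}$. What stabilizes is the image in the colimit, not the kernel.

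Consequently $\mathsf{f}$ is exhibited only as a retract of a representable functor, and the representing object is then extracted by splitting the idempotent $e\colon A_{d+1}\to A_{d+1}$ corresponding to $\mathsf{g}_{A_{d+1}}\to\mathsf{f}\to\mathsf{g}_{A_{d+1}}$; this is precisely where idempotent completeness enters (\cref{lfpIdemCompGrddRpn}). Your proposal never uses that hypothesis at this stage --- if your termination claim held, it would be superfluous --- which is a warning sign, since a retract of a representable functor need not be representable in a non-Karoubian category, so no tower of the kind you build can terminate with $K_i=0$ in general. (A smaller slip: the existence of $n$ with $\cat{T}=\thick^n(G)$ is the definition of strong generation, not a consequence of idempotent completeness.) The fix is to replace ``terminate'' by ``almost constant'': arrange $\ker(\mathsf{g}_{A_i}(G)\to\mathsf{f}(G))=\ker(\mathsf{g}_{A_i}(G)\to\mathsf{g}_{A_{i+1}}(G))$, invoke \cref{almostConstant} to get the split surjection after $d+1$ steps, and then split the idempotent.
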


Before we give a proof, we recall some definitions and properties:

\begin{discussion}
A $\BZ$-graded ring $R$ is noetherian, if and only if $R_0$ is noetherian and $R$ is finitely generated as an $R_0$-algebra; see for example \cite[Corollaire~(2.1.5)]{Grothendieck:1961} or \cite[Theorem~1.5.5]{Bruns/Herzog:1998}. In particular, such a ring is bounded below. 
\end{discussion}

\begin{discussion}
A graded $R$-linear triangulated category $\cat{T}$ is \emph{Ext-finite} if for all $X, Y \in \cat{T}$, the graded $R$-module $\Ext{\cat{T}}{X}{Y}$ is finitely generated. 

A triangulated category $\cat{T}$ is \emph{idempotent complete} if for every object $X$ in $\cat{T}$ and every idempotent $e \in \End_\cat{T}(X)$, that is $e^2 = e$, there exists an object $Y$ and maps
\begin{equation*}
i \colon Y \to X \quad\text{and}\quad p \colon X \to Y
\end{equation*}
such that $p \circ i = \id_Y$ and $i \circ p = e$. 
\end{discussion}

\begin{discussion}
A subcategory $\cat{S} \subseteq \cat{T}$ is \emph{thick}, if it is triangulated and closed under retracts. Since the intersection of thick subcategories is thick, there exists a smallest thick subcategory of $\cat{T}$ containing an object $G$, which we denote by $\thick(G)$. We say $G$ \emph{finitely builds} an object $X$ in $\cat{T}$ when $X \in \thick(G)$. 

There is an exhaustive filtration of $\thick_R(G)$: Let $\thick^1(G)$ be the smallest full subcategory containing $G$ that is closed under finite coproducts, retracts and suspension. Then
\begin{equation*}
\thick^n(G) \colonequals \set{X \in \cat{T}}{\begin{gathered}
\text{there exists } X' \in \cat{T} \text{ and an exact triangle } \\
Y \to X \oplus X' \to Z \to \susp Y \\
\text{ such that } Y \in \thick^{n-1}(G) \text{ and } Z \in \thick^1(G)
\end{gathered}}\,.
\end{equation*}
These are full subcategories and form an exhaustive filtration of $\thick(G)$; cf.\@ \cite{Bondal/VanDenBergh:2003,Avramov/Buchweitz/Iyengar/Miller:2010}. In particular, if $X$ lies in $\thick(G)$, then there exists an integer $n$, such that $X \in \thick^n(G)$. 

A triangulated category $\cat{T}$ is \emph{strongly generated}, if there exists an object $G$ in $\cat{T}$ and a non-negative integer $n$, such that $\cat{T} = \thick^n(G)$. The object $X$ is a \emph{strong generator} of $\cat{T}$; cf.\@ \cite{Rouquier:2008}.
\end{discussion}

In the reminder of this section, we give a proof of \cref{grddRep}. We fix a $\BZ$-graded graded-commutative ring $R$, a graded $R$-linear triangulated category $\cat{T}$, and a graded $R$-linear cohomological functor $\mathsf{f} \colon \op{\cat{T}} \to \grMod{R}$. 

\begin{lemma}[Graded version of Yoneda's lemma] \label{GradedYonedasLemma}
For any $X \in \cat{T}$, the map
\begin{equation*}
\Nat(\mathsf{g}_X,\mathsf{f}) \to \mathsf{f}_0(X) \quad\text{given by}\quad \eta \mapsto \eta(X)(\id_X)
\end{equation*}
is an isomorphism of abelian groups.
\end{lemma}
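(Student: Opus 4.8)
The plan is to mimic the classical proof of Yoneda's lemma, while keeping track of the grading and the $R$-linear structure. Recall that $\mathsf{g}_X = \Ext{\cat{T}}{-}{X} = \coprod_d \Hom{\cat{T}}{-}{\susp^d X}$, and that the statement concerns natural transformations of graded $R$-linear functors (by our standing convention, natural transformations respect the grading and the $R$-action). The candidate inverse map sends an element $\xi \in \mathsf{f}_0(X)$ to the natural transformation $\eta^\xi$ whose component at $Y \in \cat{T}$ is
\begin{equation*}
\eta^\xi(Y) \colon \mathsf{g}_X(Y) = \coprod_{d} \Hom{\cat{T}}{Y}{\susp^d X} \to \mathsf{f}(Y)\,, \qquad (f \colon Y \to \susp^d X) \mapsto \mathsf{f}(f)(\xi)\,,
\end{equation*}
where we use that $\mathsf{f}(\susp^d X) = \mathsf{f}(X)[-d]$, so $\mathsf{f}(f) \colon \mathsf{f}(X)[-d] \to \mathsf{f}(Y)$ and $\mathsf{f}(f)(\xi) \in \mathsf{f}(Y)_d$; collecting over all $d$ gives an element of $\mathsf{f}(Y)$ of the right total degree. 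First I would check that $\eta^\xi$ is a well-defined natural transformation of graded $R$-linear functors: naturality in $Y$ is the functoriality $\mathsf{f}(f \circ h) = \mathsf{f}(h) \circ \mathsf{f}(f)$; compatibility with the grading is the degree bookkeeping just noted; and $R$-linearity follows because the $R$-action on $\Ext{\cat{T}}{Y}{X}$ is given by postcomposition with $\varphi(r)$ and $\mathsf{f}$ is $R$-linear, so $\mathsf{f}(\varphi(r)_{\susp^d X} \circ f)(\xi) = r \cdot \mathsf{f}(f)(\xi)$.

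Next I would verify the two composites are identities. The composite $\Nat(\mathsf{g}_X,\mathsf{f}) \to \mathsf{f}_0(X) \to \Nat(\mathsf{g}_X,\mathsf{f})$ sends $\eta$ to $\eta^{\eta(X)(\id_X)}$; to see this equals $\eta$, evaluate at any $Y$ and any $f \colon Y \to \susp^d X$, and use naturality of $\eta$ applied to the morphism $f$ (regarded as a morphism $Y \to \susp^d X$ in $\cat{T}$), which gives the commuting square relating $\mathsf{g}_X(f)$ and $\mathsf{f}(f)$; chasing $\id_X \in \Hom{\cat{T}}{X}{X}$ around it yields $\eta(Y)(f) = \mathsf{f}(f)(\eta(X)(\id_X))$ — here one must be slightly careful that $\mathsf{g}_X(f)$ sends $\id_{\susp^d X}$, or rather $\id_X$ placed in the appropriate graded piece, to $f$, which is just the definition of the Hom-functor together with the identification $\Ext{\cat{T}}{\susp^d X}{X} \cong \Ext{\cat{T}}{X}{X}[-d]$. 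The other composite $\mathsf{f}_0(X) \to \Nat(\mathsf{g}_X,\mathsf{f}) \to \mathsf{f}_0(X)$ sends $\xi$ to $\eta^\xi(X)(\id_X) = \mathsf{f}(\id_X)(\xi) = \xi$, since $\mathsf{f}$ is a functor. Finally I would note both maps are homomorphisms of abelian groups: the map $\eta \mapsto \eta(X)(\id_X)$ is clearly additive, and so is its inverse, so the bijection is an isomorphism of abelian groups.

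The only genuinely delicate point — and the one I would expect to be the main obstacle — is getting the degree conventions and sign conventions straight, in particular the identification $\Ext{\cat{T}}{\susp^d X}{Y} \cong \Ext{\cat{T}}{X}{Y}[-d]$ recorded in the excerpt and the fact that the graded $R$-module structure on $\Nat(\mathsf{g}_X, \mathsf{f})$ is defined so that an element $\xi \in \mathsf{f}(X)_0$ really does correspond to a degree-$0$ natural transformation. Once one fixes the convention that $\eta^\xi$ in degree $d$ picks out the component $\Hom{\cat{T}}{-}{\susp^d X} \to \mathsf{f}(-)_d$, everything is formal; but one should double-check that the Koszul signs introduced by $\varphi(r) \susp = (-1)^{|r|} \susp \varphi(r)$ do not obstruct $R$-linearity of $\eta^\xi$, which amounts to observing that the two descriptions of the $R$-action (pre- versus post-composition with $\varphi(r)$) agree and that $\mathsf{f}$ intertwines them by hypothesis. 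Since this is the standard Yoneda argument decorated with a grading, no new idea is needed beyond careful bookkeeping.
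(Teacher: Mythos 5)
Your proposal is correct and follows essentially the same route as the paper: both define the inverse by sending $\xi \in \mathsf{f}_0(X)$ to the natural transformation $Y \mapsto (f \mapsto \mathsf{f}(f)(\xi))$, observe that homogeneity of this map comes from $\xi$ having degree zero together with $\mathsf{f}(\susp^d X) = \mathsf{f}(X)[-d]$, and then run the standard Yoneda verification. The paper leaves the remaining checks as ``straightforward,'' whereas you spell them out; the content is the same.
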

\begin{proof}
For $u \in \mathsf{f}_0(X)$, we define a natural transformation
\begin{equation*}
\eta_u \colon \mathsf{g}_X \to \mathsf{f} \quad\text{as}\quad \eta_u(Y)(f) \colonequals \mathsf{f}(f)(u)
\end{equation*}
where $Y \in \cat{T}$ and $f \in \Ext{\cat{T}}{Y}{X}$. Since $u$ is a degree zero element, the map $\eta_u(Y)$ is homogeneous. It is straightforward to verify that this is the inverse of the map in the claim and both are maps of abelian groups.
\end{proof}

In particular, any morphism $f \colon X \to Y$ corresponds to a natural transformation
\begin{equation*}
f_* \colon \mathsf{g}_X \to \mathsf{g}_Y
\end{equation*}
given by post-composition. 

\begin{discussion}
Adapting the definitions in \cite[Section~4]{Rouquier:2008}, a graded $R$-linear functor $\mathsf{f} \colon \op{\cat{T}} \to \grMod{R}$ is
\begin{itemize}
\item \emph{locally finitely generated}, if for every $X$ in $\cat{T}$ there exists $Y$ in $\cat{T}$ and a natural transformation $\zeta \colon \mathsf{g}_Y \to \mathsf{f}$ such that $\zeta(X)$ is surjective, 
\item \emph{locally finitely presented}, if it is locally finitely generated and the kernel of any natural transformation $\mathsf{g}_Y \to \mathsf{f}$ is locally finitely generated, and
\item \emph{locally finite}, if $\mathsf{f}$ only takes values in $\grmod{R}$. 
\end{itemize}
\end{discussion}

When $\mathsf{f} \colon \op{\cat{T}} \to \grMod{R}$ is locally finitely generated or locally finitely presented, then $\mathsf{f}_d \colon \op{\cat{T}} \to \Mod{R_0}$ is locally finitely generated or locally finitely presented in the sense of \cite[Section~4]{Rouquier:2008}, respectively. The same need not hold for locally finite, for examples see \cref{sec:Applications}. 

If $\cat{T}$ is Ext-finite, then any graded representable functor is locally finite. Without the assumption that $\cat{T}$ is Ext-finite, we can make the following statement:

\begin{lemma} \label{grddRpnLocfp}
Any graded representable functor is locally finitely presented.
\end{lemma}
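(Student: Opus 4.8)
The plan is to verify the two conditions in the definition of \emph{locally finitely presented} directly for the graded representable functor $\mathsf{f} = \mathsf{g}_X = \Ext{\cat{T}}{-}{X}$. Local finite generation is immediate, and in fact holds uniformly in the object: taking $Y = X$ and $\zeta = \id_{\mathsf{g}_X} \colon \mathsf{g}_X \to \mathsf{g}_X$, the map $\zeta(Z)$ is an isomorphism, hence surjective, for every $Z \in \cat{T}$. So all the content lies in the condition on kernels.

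So let $\eta \colon \mathsf{g}_Y \to \mathsf{g}_X$ be an arbitrary natural transformation. By the graded Yoneda lemma (\cref{GradedYonedasLemma}), $\eta$ is determined by the element $f \colonequals \eta(Y)(\id_Y)$, which lies in the degree-zero part $\mathsf{g}_X(Y)_0 = \Hom{\cat{T}}{Y}{X}$; unwinding the proof of \cref{GradedYonedasLemma} shows that $\eta = f_*$ is post-composition with $f$. I would then complete $f$ to an exact triangle
\begin{equation*}
W \xrightarrow{h} Y \xrightarrow{f} X \to \susp W
\end{equation*}
in $\cat{T}$ and apply $\Ext{\cat{T}}{Z}{-}$ for each $Z \in \cat{T}$. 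Since this is a cohomological functor valued in $\grMod{R}$, it produces an exact sequence of graded $R$-modules
\begin{equation*}
\mathsf{g}_W(Z) \xrightarrow{h_*} \mathsf{g}_Y(Z) \xrightarrow{f_*} \mathsf{g}_X(Z)\,.
\end{equation*}
Hence $\ker(\eta)(Z) = \ker\bigl(f_*(Z)\bigr) = \operatorname{im}\bigl(h_*(Z)\bigr)$ for every $Z$. As $f \circ h = 0$, one has $\eta \circ h_* = (f \circ h)_* = 0$, so $h_*$ factors as a natural transformation $\mathsf{g}_W \to \ker(\eta)$ through the graded $R$-linear subfunctor $\ker(\eta) \subseteq \mathsf{g}_Y$, and this factorization is surjective at every object $Z$. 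In particular, for every $Z$ the pair consisting of $W$ and $h_* \colon \mathsf{g}_W \to \ker(\eta)$ witnesses local finite generation of $\ker(\eta)$. Together with the previous paragraph, this shows $\mathsf{g}_X$ is locally finitely presented.

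I do not expect a genuine obstacle: the argument is formal once one has the graded Yoneda lemma and the long exact sequence attached to a triangle. The only points that require a little care are the identification of $\eta$ with $f_*$ coming out of \cref{GradedYonedasLemma}, the verification that the comparison map $h_*$ really lands in — and surjects objectwise onto — the subfunctor $\ker(\eta)$ (both consequences of $f \circ h = 0$ together with functoriality), and the bookkeeping that everything in sight respects the grading and the $R$-action, which is automatic because $\Ext{\cat{T}}{Z}{-}$ is a graded $R$-linear cohomological functor.
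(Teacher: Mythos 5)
Your proposal is correct and follows essentially the same route as the paper's proof: complete the Yoneda-corresponding morphism $f\colon Y\to X$ to an exact triangle and use the resulting exact sequence $\mathsf{g}_W\to\mathsf{g}_Y\to\mathsf{g}_X$ to see that the kernel is an objectwise-surjective image of the representable functor $\mathsf{g}_W$. You simply spell out more explicitly the factorization of $h_*$ through $\ker(\eta)$, which the paper leaves implicit.
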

\begin{proof}
It is clear that a graded representable functor is locally finitely generated. Let $\mathsf{g}_X$ be a graded representable functor, and $\mathsf{g}_Y \to \mathsf{g}_X$ a natural transformation. By Yoneda's \cref{GradedYonedasLemma}, this corresponds to a morphism $Y \to X$. If we complete this to an exact triangle $Z \to Y \to X \to \susp Z$, the sequence
\begin{equation*}
\mathsf{g}_Z \to \mathsf{g}_Y \to \mathsf{g}_X
\end{equation*}
is exact on $\cat{T}$. In particular, the kernel of $\mathsf{g}_Y \to \mathsf{g}_X$ is locally finitely generated.
\end{proof}

\begin{lemma} \label{ExistenceSurjNatTrans}
If $\mathsf{f}$ is locally finite, then $\mathsf{f}$ is locally finitely generated. 
\end{lemma}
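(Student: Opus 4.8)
The plan is to produce, for each object $X$, a surjection onto $\mathsf{f}(X)$ from a representable functor $\mathsf{g}_Y$ in which $Y$ is a \emph{finite} coproduct of suspensions of $X$. Finite generation of $\mathsf{f}(X)$ is exactly what makes finitely many suspensions enough, and finite coproducts are available in any triangulated category; no further hypotheses on $\cat{T}$ or $R$ are needed here.

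So fix $X \in \cat{T}$. Since $\mathsf{f}$ is locally finite, $\mathsf{f}(X)$ lies in $\grmod{R}$, hence is generated as an $R$-module by finitely many homogeneous elements $u_1, \dots, u_m$, with $u_j \in \mathsf{f}(X)_{n_j}$. Graded $R$-linearity of $\mathsf{f}$ gives $\mathsf{f}(\susp^{-n_j}X) = \mathsf{f}(X)[n_j]$ and hence a canonical identification $\mathsf{f}_0(\susp^{-n_j}X) = \mathsf{f}(X)_{n_j}$, so applying the graded Yoneda \cref{GradedYonedasLemma} to the object $\susp^{-n_j}X$ turns $u_j$ into a natural transformation $\eta_j \colon \mathsf{g}_{\susp^{-n_j}X} \to \mathsf{f}$ with $\eta_j(\susp^{-n_j}X)(\id_{\susp^{-n_j}X}) = u_j$.

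Next set $Y \colonequals \coprod_{j=1}^m \susp^{-n_j}X$. As $\Ext{\cat{T}}{-}{-}$ sends finite coproducts in the second variable to direct sums, $\mathsf{g}_Y \cong \coprod_{j=1}^m \mathsf{g}_{\susp^{-n_j}X}$, and the $\eta_j$ assemble into a natural transformation $\zeta \colon \mathsf{g}_Y \to \mathsf{f}$. It remains to show $\zeta(X) \colon \Ext{\cat{T}}{X}{Y} \to \mathsf{f}(X)$ is surjective. Since $\zeta(X)$ is a homomorphism of graded $R$-modules, its image is a graded $R$-submodule of $\mathsf{f}(X)$, so it suffices to hit each generator $u_j$. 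Let $\iota_j \colon X \to \susp^{-n_j}X$ be the degree $n_j$ morphism corresponding to $\id_X$ under $\Hom{\cat{T}}{X}{\susp^{n_j}\susp^{-n_j}X} \cong \Hom{\cat{T}}{X}{X}$. Applying $\zeta(X)$ to the element of $\Ext{\cat{T}}{X}{Y}$ whose $j$-th component is $\iota_j$ and whose other components vanish, naturality of $\eta_j$ together with $\eta_j(\susp^{-n_j}X)(\id_{\susp^{-n_j}X}) = u_j$ yields the value $\eta_j(X)(\iota_j) = u_j$. Hence $\zeta(X)$ is surjective and $\mathsf{f}$ is locally finitely generated.

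The heart of the argument is the first step: replacing the a priori unbounded amount of Yoneda data --- one element of $\mathsf{f}_0$ for every suspension of $X$ --- by the finite family coming from module generators of the single module $\mathsf{f}(X)$, after which everything is formal. The one point that needs care, and the only place where the graded $R$-linear structure of $\mathsf{f}$ is genuinely used, is the bookkeeping with suspension shifts: checking that $\mathsf{f}_0(\susp^{-n_j}X) = \mathsf{f}(X)_{n_j}$ is the identification induced by graded linearity, and that under it $\mathsf{f}(\iota_j)$ is the canonical isomorphism $\mathsf{f}(\susp^{-n_j}X) \cong \mathsf{f}(X)$ carrying $u_j$ to $u_j$, so that $\eta_j(X)(\iota_j) = u_j$ really holds.
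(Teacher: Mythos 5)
Your proof is correct and follows essentially the same route as the paper's: choose finitely many homogeneous generators of $\mathsf{f}(X)$, form $Y$ as the finite coproduct of suspensions of $X$ shifted so that the generators become degree-zero Yoneda data, and check surjectivity of the resulting $\mathsf{g}_Y \to \mathsf{f}$ at $X$. The only cosmetic difference is that you apply the graded Yoneda lemma summand by summand and assemble, whereas the paper applies it once to the canonical element of $\mathsf{f}_0(Y)$; your explicit verification that the image is a graded submodule hitting each generator, and your care with the sign of the suspension shift, are welcome.
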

\begin{proof}
Let $X$ be an object in $\cat{T}$. Then the $R$-module $\mathsf{f}(X)$ is finitely generated, and we can choose a finite set of homogeneous generators $x_1, \dots, x_n$ of $\mathsf{f}(X)$ in degrees $d_1, \dots, d_n$. Set
\begin{equation*}
Y \colonequals \coprod_{j=1}^n \susp^{d_j} X\,.
\end{equation*}
For every generator $x_j$, we obtain canonical maps
\begin{equation*}
\susp^{d_j} X \xrightarrow{i_j} Y \xrightarrow{p_j} \susp^{d_j} X
\end{equation*}
whose composition is the identity map on $\susp^{d_j} X$. Let $y \in \mathsf{f}(Y)$ be the canonical element, for which
\begin{equation*}
x_j = \mathsf{f}(i_j)(y) \quad\text{for } 1 \leq j \leq n\,.
\end{equation*}
Because of the suspensions introduced in the definition of $Y$, the element $y$ is homogeneous of degree 0. By Yoneda's lemma~\ref{GradedYonedasLemma}, the element $y$ corresponds to the natural transformation $\zeta \colon \mathsf{g}_Y \to \mathsf{f}$ with $\zeta(Y)(\id_Y) = y$. Then $\zeta(y)(i_j) = x_j$, and $\zeta(X)$ is surjective. That is $\mathsf{f}$ is locally finitely generated. 
\end{proof}

In general a locally finite functor need not be locally finitely presented. This requires further assumptions on $R$ and $\cat{T}$: 

\begin{lemma} \label{LocallyFiniteThenLFP}
If $R$ is noetherian and $\cat{T}$ Ext-finite, then a locally finite functor $\mathsf{f} \colon \op{\cat{T}} \to \grMod{R}$ is locally finitely presented.
\end{lemma}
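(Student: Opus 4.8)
The goal is: given $\mathsf{f}$ locally finite (values in $\grmod{R}$), show it is locally finitely presented. Since local finite generation is already handed to us by \cref{ExistenceSurjNatTrans}, the only thing left is: for every natural transformation $\zeta \colon \mathsf{g}_Y \to \mathsf{f}$, the kernel functor $\mathsf{k} \colonequals \ker(\zeta)$ is locally finitely generated. So I fix such a $\zeta$ and an object $X \in \cat{T}$, and I must produce an object $W$ and a natural transformation $\mathsf{g}_W \to \mathsf{k}$ that is surjective at $X$.

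\smallskip

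The plan is to realize $\mathsf{k}(X)$ as a subobject of $\mathsf{g}_Y(X) = \Ext{\cat{T}}{X}{Y}$ and exploit noetherianity twice. First, $\mathsf{g}_Y(X)$ is a finitely generated graded $R$-module: this is where I will need an Ext-finiteness-type hypothesis packaged through the standard fact that $\mathsf{g}_Y$ is locally finite when $\cat{T}$ is Ext-finite (each $\Ext{\cat{T}}{X}{Y}$ is finitely generated by assumption). Since $R$ is noetherian, the submodule $\mathsf{k}(X) = \ker\big(\zeta(X)\colon \Ext{\cat{T}}{X}{Y} \to \mathsf{f}(X)\big)$ is again finitely generated. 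Pick finitely many homogeneous generators $g_1, \dots, g_m \in \mathsf{k}(X) \subseteq \Ext{\cat{T}}{X}{Y}$, say $g_j$ of degree $e_j$, i.e. morphisms $g_j \colon X \to \susp^{e_j} Y$. Form $W \colonequals \coprod_{j=1}^m \susp^{e_j} X$ with the structure maps $\susp^{e_j} X \xrightarrow{i_j} W \xrightarrow{p_j} \susp^{e_j} X$ (composition the identity), exactly as in the proof of \cref{ExistenceSurjNatTrans}. The tuple $(g_j)$ assembles into a single morphism $g \colon X \to \susp^? $ — more precisely, the $g_j$ correspond under adjunction/Yoneda to a homogeneous degree-zero element of $\mathsf{g}_Y(W)$, hence by \cref{GradedYonedasLemma} to a natural transformation $\psi \colon \mathsf{g}_W \to \mathsf{g}_Y$. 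The point is that $\psi$ lands in $\mathsf{k}$: I need $\zeta \circ \psi = 0$. By \cref{GradedYonedasLemma} this composite corresponds to the element $\psi$ evaluated into $\mathsf{f}$, and by naturality it is determined by the images of the $i_j$, which are precisely the $g_j \in \mathsf{k}(X) = \ker\zeta(X)$; so $\zeta(X)\circ\psi(X) = 0$, and since a natural transformation out of a representable functor is determined by its value on the identity (Yoneda), $\zeta \circ \psi = 0$ globally. Thus $\psi$ factors through $\mathsf{k}$, giving $\bar\psi \colon \mathsf{g}_W \to \mathsf{k}$, and by construction $\bar\psi(X)$ hits all the generators $g_j$, hence is surjective onto $\mathsf{k}(X)$. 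This shows $\mathsf{k}$ is locally finitely generated at $X$; as $X$ was arbitrary, $\mathsf{k}$ is locally finitely generated, so $\mathsf{f}$ is locally finitely presented.

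\smallskip

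The step I expect to be most delicate is checking that the natural transformation $\psi$ built from the generators $g_j$ genuinely factors through the subfunctor $\mathsf{k}$ — that is, that $\zeta \circ \psi$ vanishes not just at $X$ but as a natural transformation. This is where the graded Yoneda lemma (\cref{GradedYonedasLemma}) does the real work: both $\psi$ and $\zeta \circ \psi$ are maps out of a representable functor, so each is determined by its value on $\id_W$, and that value for $\zeta\circ\psi$ is $\zeta(W)\big(\psi(W)(\id_W)\big)$; unwinding $\psi(W)(\id_W)$ through the $i_j$ and using that $g_j \in \ker\zeta(X)$ together with naturality of $\zeta$ in the splittings $i_j, p_j$ forces this to be zero. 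A secondary point requiring care is that $\mathsf{k}$ is actually a well-defined graded $R$-linear subfunctor of $\mathsf{g}_Y$ (kernels of natural transformations of graded $R$-linear functors are computed degreewise and inherit the grading and $R$-action), but this is routine. Finally I should record explicitly where each hypothesis enters: Ext-finiteness gives finite generation of $\mathsf{g}_Y(X)$, and noetherianity of $R$ gives finite generation of the submodule $\mathsf{k}(X)$ — both are essential, and without them $\mathsf{k}(X)$ need not admit a finite generating set.
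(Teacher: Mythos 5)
Your proof is correct and follows essentially the same route as the paper: Ext-finiteness makes $\mathsf{g}_Y(X)$ finitely generated, noetherianity of $R$ makes the kernel $\mathsf{k}(X)$ finitely generated, and then the kernel functor is locally finitely generated. The only difference is cosmetic — the paper observes that $\mathsf{k}$ is itself locally finite and simply cites \cref{ExistenceSurjNatTrans} for it, whereas you re-run that lemma's generator-and-coproduct construction inside $\mathsf{k}$ and verify the factorization through the subfunctor by hand.
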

\begin{proof}
By \cref{ExistenceSurjNatTrans} the functor $\mathsf{f}$ is locally finitely generated. Let $\mathsf{g}_Y \to \mathsf{f}$ be a natural transformation. We set
\begin{equation*}
\mathsf{f}'(X) \colonequals \ker(\mathsf{g}_Y(X) \to \mathsf{f}(X))\,.
\end{equation*}
Since $\cat{T}$ is Ext-finite, the $R$-module $\mathsf{g}_Y(X)$ is finitely generated. By assumption on $\mathsf{f}$, so is $\mathsf{f}(X)$. Since $R$ is noetherian, the kernel $\mathsf{f}'(X)$ is also finitely generated. Thus $\mathsf{f}'$ is a locally finite functor and by \cref{ExistenceSurjNatTrans} it is locally finitely generated. In particular, $\mathsf{f}$ is locally finitely presented. 
\end{proof}

\begin{discussion}
Let $(\mathsf{f}_i,\eta_i)_{i > 0}$ be a direct system of cohomological functors $\mathsf{f}_i \colon \cat{T} \to \cat{A}$ where $\cat{A}$ an abelian category and natural transformations $\eta_i \colon \mathsf{f}_i \to \mathsf{f}_{i+1}$. Following \cite[4.2.2]{Rouquier:2008}, a direct system $(\mathsf{f}_i,\eta_i)_{i > 0}$ is \emph{almost constant on} a subcategory $\cat{S}$ of $\cat{T}$, if for every $X \in \cat{S}$ the sequence
\begin{equation*}
0 \to \ker(\eta_i(X)) \to \mathsf{f}_i(X) \to \colim_j \mathsf{f}_j(X) \to 0
\end{equation*}
is exact for all positive integers $i$. 

A direct system $(X_i,f_i)_{i>0}$ of objects $X_i$ and morphisms $f_i \colon X_i \to X_{i+1}$ in $\cat{T}$ is \emph{almost constant on $\cat{S}$}, if the induced direct system of functors $(\mathsf{g}_{X_i},(f_i)_*)_{i>0}$ is almost constant on $\cat{S}$. 
\end{discussion}

For almost constant direct systems the following hold; see \cite[Proposition~4.13]{Rouquier:2008}.

\begin{facts} \label{almostConstant}
Let $\cat{S} \subseteq \cat{T}$ be a subcategory closed under suspension, and $(\mathsf{f}_i,\eta_i)_{i>0}$ a direct system that is almost constant on $\cat{S}$. Then
\begin{enumerate}
\item $(\mathsf{f}_{ni+r})_{i \geqslant 0}$ is almost constant on $\thick^n(\cat{S})$ for any $r > 0$, and
\item $\mathsf{f}_{n+1} \to \colim \mathsf{f}_i$ is split surjective on $\thick^n(\cat{S})$. 
\end{enumerate}
If the functors $\mathsf{f}_i$ are graded $R$-linear, the assumption that $\cat{S}$ is closed under suspension is redundant. 
\end{facts}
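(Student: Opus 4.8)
The plan is to adapt the proof of \cite[Proposition~4.13]{Rouquier:2008}, running an induction on $n$ with the triangle in the definition of $\thick^n$ as the engine; the graded refinement is then essentially formal. Write $\mathsf{f}_\infty \colonequals \colim_j \mathsf{f}_j$, with structure maps $\pi_i \colon \mathsf{f}_i \to \mathsf{f}_\infty$, so that almost constancy of $(\mathsf{f}_i)$ on $\cat{S}$ says precisely that $\pi_i(X)$ is surjective with $\ker\pi_i(X) = \ker\eta_i(X)$ for every $X \in \cat{S}$ and $i > 0$. Two elementary observations are used throughout. First, any reindexed subsystem of a system almost constant on $X$ — in particular $(\mathsf{f}_{ni+r})_{i\geqslant 0}$ — is again almost constant on $X$: the colimit is unchanged, surjectivity of $\pi$ persists, and the kernel of a composite of transition maps is caught between $\ker\eta_i(X)$ and $\ker\pi_i(X)$, which coincide. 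Second, if $(\mathsf{f}_i)$ is almost constant on $X$ then $\pi_{i+1}(X)$ restricts to an isomorphism $\operatorname{im}\eta_i(X)\xrightarrow{\,\sim\,}\mathsf{f}_\infty(X)$ for $i\geqslant 1$ — both sides being canonically $\mathsf{f}_i(X)/\ker\eta_i(X)$ — so $\pi_{i+1}(X)$ is split surjective; and a section of $\pi_j(X)$ pushes forward along the transition maps to a section of $\pi_{j'}(X)$ for every $j'\geqslant j$.

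For $n=1$: because $\cat{S}$ is closed under suspension, $\thick^1(\cat{S})$ is the closure of $\cat{S}$ under finite coproducts and retracts, and every operation entering the definition of almost constancy — kernels, images and the filtered colimit $\mathsf{f}_\infty$ — commutes with finite coproducts and passes to direct summands. Hence $(\mathsf{f}_i)$, and by the first observation each $(\mathsf{f}_{i+r})_{i\geqslant 0}$, is almost constant on $\thick^1(\cat{S})$; the split surjection $\mathsf{f}_2(X)\to\mathsf{f}_\infty(X)$ on $\thick^1(\cat{S})$ is the second observation with $i=1$. This is the only point at which the suspension-closure of $\cat{S}$ is used.

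Assume the statement for $n-1$, let $W\in\thick^n(\cat{S})$, choose $W'$ and an exact triangle $Y\to W\oplus W'\to Z\to\susp Y$ with $Y\in\thick^{n-1}(\cat{S})$ and $Z\in\thick^1(\cat{S})$, and set $E\colonequals W\oplus W'$. The inductive hypothesis applied to the systems $(\mathsf{f}_{(n-1)j+s})_j$ (over all $s>0$) shows $(\mathsf{f}_i)$ behaves at $Y$ and $\susp Y$ like an almost constant system of step $n-1$: there $\pi_i$ is surjective for all $i\geqslant n$, and $\ker\pi_m(Y)=\ker\!\big(\mathsf{f}_m(Y)\to\mathsf{f}_{m+n-1}(Y)\big)$; by the case $n=1$, $(\mathsf{f}_i)$ is almost constant at $Z$ and $\susp^{-1}Z$. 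Fix $r>0$, write $m=ni+r$, apply the $\mathsf{f}_k$ to the triangle, and chase the long exact sequences. A class $x\in\ker\pi_m(E)$ has image in $\mathsf{f}_m(Y)$ lying in $\ker\pi_m(Y)$, so the image of $x$ in $\mathsf{f}_{m+n-1}(E)$ comes from some $z_1\in\mathsf{f}_{m+n-1}(Z)$; since it maps to $0$ in $\mathsf{f}_\infty(E)$, the element $\pi_{m+n-1}(Z)(z_1)$ lies in the image of $\mathsf{f}_\infty(\susp Y)$, which lifts along the surjection $\pi_{m+n-1}(\susp Y)$; correcting $z_1$ by such a lift lands it in $\ker\pi_{m+n-1}(Z)=\ker\eta_{m+n-1}(Z)$, and as two consecutive maps in the long exact sequence compose to $0$, one further transition step kills the image of $x$ in $\mathsf{f}_{m+n}(E)$. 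A companion chase, in which the only obstruction to lifting a class of $\mathsf{f}_\infty(E)$ back to $\mathsf{f}_m(E)$ lies in a term $\mathsf{f}_k(\susp^{-1}Z)$ and is removed by almost constancy there, shows $\pi_m(E)$ is surjective. Hence $(\mathsf{f}_{ni+r})_{i\geqslant 0}$ is almost constant at $E$, and therefore at the retract $W$, proving (1). For (2), part (1) with $r=1$ gives surjectivity of $\mathsf{f}_{n+1}(W)\to\mathsf{f}_\infty(W)$ on $\thick^n(\cat{S})$, and combining this with the second observation applied to the almost constant system $(\mathsf{f}_{nj+1})_j$, pushed back through the transition out of $\mathsf{f}_{n+1}$ as in \cite[Proposition~4.13]{Rouquier:2008}, upgrades it to a split surjection.

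Finally, the graded refinement. If the $\mathsf{f}_i$ are graded $R$-linear then $\mathsf{f}_i(\susp^d X)=\mathsf{f}_i(X)[-d]$, and the shift $M\mapsto M[-d]$ is an exact autoequivalence of $\grMod{R}$ that commutes with filtered colimits and with kernels; thus $0\to\ker\eta_i(\susp^d X)\to\mathsf{f}_i(\susp^d X)\to\colim_j\mathsf{f}_j(\susp^d X)\to 0$ is the $[-d]$-shift of the corresponding sequence at $X$, and $(\mathsf{f}_i)$ is almost constant at $X$ if and only if it is almost constant at $\susp^d X$ for every $d\in\BZ$. Consequently a system almost constant on $\cat{S}$ is automatically almost constant on the smallest suspension-closed subcategory containing $\cat{S}$, and the preceding argument applies with no assumption on $\cat{S}$ beyond what is stated. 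I expect the main obstacle to be the bookkeeping in the inductive chase: interleaving the step-$(n-1)$ behaviour at $Y$ with one further transition so that a class in $\ker\pi_m(E)$ is killed after exactly $n$ — not $2n$ — transition maps, and tracking the exact index at which the surjection $\mathsf{f}_{n+1}\to\colim_j\mathsf{f}_j$ in part (2) acquires a section.
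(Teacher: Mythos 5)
The paper offers no proof of these Facts at all --- they are recorded as a quotation of \cite[Proposition~4.13]{Rouquier:2008} --- so the only meaningful comparison is with Rouquier's argument, and your write-up is a faithful reconstruction of it: the reindexing observation, the splitting of $\pi_{i+1}(X)$ through $\operatorname{im}\eta_i(X)\cong\mathsf{f}_\infty(X)$, the base case by additivity, the two long-exact-sequence chases through $Y\to W\oplus W'\to Z\to\susp Y$, and passage to the retract $W$. The kernel chase is right, including the count that a class in $\ker\pi_m(E)$ dies after exactly $(n-1)+1=n$ transition steps, and the graded addendum is handled correctly (the point being that the $\eta_i$ are compatible with the identifications $\mathsf{f}_i(\susp^d X)=\mathsf{f}_i(X)[-d]$, so almost constancy at $X$ and at $\susp^d X$ are equivalent).

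One caveat on the surjectivity half, which your closing remark about bookkeeping correctly anticipates but does not resolve. Your companion chase removes the obstruction in $\mathsf{f}_k(\susp^{-1}Z)$ at the cost of one transition step, so from a lift of $\xi|_Y$ at level $m-1$ it produces a preimage of $\xi$ only in $\mathsf{f}_m(E)$ for $m\geqslant n+1$; it cannot give surjectivity of $\pi_r(E)$ for the bottom terms $r\leqslant n$, and no argument can. For example, in $\dbcat{\mod{\BZ}}$ take the triangle $\BZ/2\to\BZ/4\to\BZ/2\to\susp(\BZ/2)$ and $\mathsf{f}_i=\operatorname{Hom}(-,\BZ/2^i)$ with transition maps induced by the inclusions $\BZ/2^i\hookrightarrow\BZ/2^{i+1}$: this system is almost constant on all suspensions of $\BZ/2$, yet $\mathsf{f}_1(\BZ/4)\cong\BZ/2\to\colim_j\mathsf{f}_j(\BZ/4)\cong\BZ/4$ is not surjective. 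So statement (1) must be read as asserting the exact sequences only at the terms with $i\geqslant 1$, i.e.\ from level $n+r$ onwards; this is what Rouquier's proof (and yours) actually establishes, it is consistent with the index $n+1$ appearing in (2), and it is all that is used in the proof of \cref{lfpiffRetractGrddRpn}. With that reading your argument is complete.
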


\begin{proposition} \label{lfpiffRetractGrddRpn}
Let $\cat{T}$ be a strongly generated, graded $R$-linear triangulated category and $\mathsf{f} \colon \op{\cat{T}} \to \grMod{R}$ a cohomological graded $R$-linear functor. Then $\mathsf{f}$ is locally finitely presented if and only if $\mathsf{f}$ is a retract of a graded representable functor. 
\end{proposition}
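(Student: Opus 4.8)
The plan is to prove the two implications separately. The direction ``retract of a graded representable functor $\Rightarrow$ locally finitely presented'' is a short formal argument refining \cref{grddRpnLocfp}, while the converse is the substantial one and will be obtained by constructing a ``resolution'' of $\mathsf{f}$ by representable functors and then invoking the almost constant machinery of \cref{almostConstant}. For the easy direction I would take $\iota\colon\mathsf{f}\to\mathsf{g}_X$ and $\pi\colon\mathsf{g}_X\to\mathsf{f}$ with $\pi\iota=\id_{\mathsf{f}}$; then $\pi$ is pointwise split surjective, so $\mathsf{f}$ is locally finitely generated. Given any $\psi\colon\mathsf{g}_Y\to\mathsf{f}$, \cref{GradedYonedasLemma} writes $\iota\psi$ as $h_*$ for a morphism $h\colon Y\to X$; completing $h$ to an exact triangle $Z\to Y\xrightarrow{h}X\to\susp Z$ gives an exact sequence $\mathsf{g}_Z\to\mathsf{g}_Y\xrightarrow{h_*}\mathsf{g}_X$ on $\cat{T}$. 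Since $\iota$ is pointwise split mono, $\ker\psi=\ker h_*$, so $\mathsf{g}_Z\to\mathsf{g}_Y$ factors through $\ker\psi$ and is pointwise surjective onto it; hence $\ker\psi$ is locally finitely generated and $\mathsf{f}$ is locally finitely presented.

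For the converse, fix a strong generator $G$ with $\cat{T}=\thick^n(G)$ and set $\cat{S}=\thick^1(G)$, recalling that every object of $\cat{S}$ is a retract of a finite coproduct of suspensions of $G$. The key preliminary observation is that, for any graded $R$-linear functor $\mathsf{h}'$, a natural transformation $\mathsf{g}_W\to\mathsf{h}'$ surjective at $G$ is automatically surjective on all of $\cat{S}$ (surjectivity is inherited by suspensions, finite coproducts and retracts); so if $\mathsf{h}'$ is locally finitely generated there is a \emph{single} $W$ with $\mathsf{g}_W\to\mathsf{h}'$ surjective on all of $\cat{S}$. Using this I will build a direct system $X_1\xrightarrow{f_1}X_2\xrightarrow{f_2}\cdots$ in $\cat{T}$ with natural transformations $\zeta_i\colon\mathsf{g}_{X_i}\to\mathsf{f}$ satisfying $\zeta_i=\zeta_{i+1}\circ(f_i)_*$ and, restricted to $\cat{S}$, such that each $\zeta_i$ is surjective and $\ker\zeta_i=\ker((f_i)_*)$. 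As $\mathsf{f}$ is locally finitely generated, the observation gives $\zeta_1\colon\mathsf{g}_{X_1}\to\mathsf{f}$ surjective on $\cat{S}$. Given $\zeta_i$, set $\mathsf{k}_i=\ker\zeta_i\subseteq\mathsf{g}_{X_i}$; it is graded $R$-linear and, since $\mathsf{f}$ is locally finitely presented, locally finitely generated, so the observation together with \cref{GradedYonedasLemma} yields $w_i\colon W_i\to X_i$ with $(w_i)_*$ factoring through $\mathsf{k}_i$ and surjective onto it on $\cat{S}$. Put $X_{i+1}=\cone(w_i)$, giving a triangle $W_i\xrightarrow{w_i}X_i\xrightarrow{f_i}X_{i+1}\to\susp W_i$. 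Since $w_i=(w_i)_*(\id_{W_i})$ lies in $\mathsf{k}_i(W_i)=\ker\zeta_i(W_i)$, \cref{GradedYonedasLemma} gives $\zeta_i\circ(w_i)_*=0$; applying the cohomological functor $\mathsf{f}$ to the triangle and taking degree-$0$ parts yields, via \cref{GradedYonedasLemma} again, an exact sequence $\Nat(\mathsf{g}_{X_{i+1}},\mathsf{f})\to\Nat(\mathsf{g}_{X_i},\mathsf{f})\to\Nat(\mathsf{g}_{W_i},\mathsf{f})$ in which $\zeta_i\mapsto 0$, so $\zeta_i$ lifts to $\zeta_{i+1}$. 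Surjectivity of $\zeta_{i+1}$ on $\cat{S}$ is inherited from $\zeta_i$, and the long exact sequence of the triangle gives $\ker((f_i)_*)=\operatorname{im}((w_i)_*)=\mathsf{k}_i=\ker\zeta_i$ on $\cat{S}$.

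To finish I would check that the system $(\mathsf{g}_{X_i},(f_i)_*)$ is almost constant on $\cat{S}$: for $X\in\cat{S}$, if $u\in\mathsf{g}_{X_i}(X)$ vanishes in $\colim_j\mathsf{g}_{X_j}(X)$ then its image in some $\mathsf{g}_{X_j}(X)$ is zero, whence $\zeta_i(X)(u)=0$ by compatibility of the $\zeta$'s, i.e.\ $u\in\ker\zeta_i(X)=\ker((f_i)_*(X))$, which is exactly the required exactness. Then $\mathsf{h}\colonequals\colim_i\mathsf{g}_{X_i}$ is a cohomological graded $R$-linear functor (filtered colimits being exact in $\grMod{R}$), and the $\zeta_i$ induce $\zeta_\infty\colon\mathsf{h}\to\mathsf{f}$, which on $\cat{S}$ is surjective (each $\zeta_i$ is) and injective (any kernel element is represented by some $u\in\ker\zeta_i(X)=\ker((f_i)_*(X))$, already $0$ in $\mathsf{h}(X)$), hence an isomorphism on $\cat{S}$. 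A five-lemma induction up the filtration $\cat{S}=\thick^1(G)\subseteq\cdots\subseteq\thick^n(G)=\cat{T}$, using that $\mathsf{h}$ and $\mathsf{f}$ are cohomological, upgrades $\zeta_\infty$ to an isomorphism on all of $\cat{T}$. On the other hand, \cref{almostConstant}\,(2) shows $\mathsf{g}_{X_{n+1}}\to\mathsf{h}$ is split surjective on $\thick^n(\cat{S})\supseteq\cat{T}$; hence $\mathsf{h}$, and therefore $\mathsf{f}$, is a retract of the graded representable functor $\mathsf{g}_{X_{n+1}}$.

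I expect the main obstacle to be the inductive construction of $(X_i,\zeta_i)$: one must simultaneously keep $\zeta_i$ surjective on $\cat{S}$, arrange that the next cone $X_{i+1}$ captures $\ker\zeta_i$ exactly, and lift $\zeta_i$ along $(f_i)_*$ — the last point being precisely where the cohomological hypothesis on $\mathsf{f}$ and \cref{GradedYonedasLemma} must be used in tandem. Once the system is in place, the passage from ``isomorphism on $\cat{S}$'' to ``isomorphism on $\cat{T}$'' and the application of \cref{almostConstant}\,(2) are comparatively formal.
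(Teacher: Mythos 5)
Your proof is correct and takes essentially the same route as the paper: the easy direction is identical, and in the substantial direction you build the same direct system of representables by coning off covers of the kernels of the $\zeta_i$, verify almost constancy on the generator (resp.\ its closure $\thick^1(G)$), identify the colimit with $\mathsf{f}$, and invoke \cref{almostConstant} to exhibit $\mathsf{f}$ as a retract of $\mathsf{g}_{X_{n+1}}$. The only point you leave implicit is the surjectivity of $\mathsf{g}_{X_i}(X)\to\colim_j\mathsf{g}_{X_j}(X)$ demanded by the definition of almost constant; it follows quickly from the surjectivity of the $\zeta_j$ together with $\ker\zeta_j=\ker((f_j)_*)$, and the paper's own proof is equally terse on this step.
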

\begin{proof}
We assume $\mathsf{f}$ is locally finitely presented. Let $G \in \cat{T}$ be a strong generator of $\cat{T}$ with $\thick^d(G) = \cat{T}$. Then there exist $A_1 \in \cat{T}$ and a natural transformation $\zeta_1 \colon \mathsf{g}_{A_1} \to \mathsf{f}$ such that $\zeta_1(G)$ is surjective. Inductively we construct a direct system
\begin{equation*}
\mathsf{g}_{A_1} \to \mathsf{g}_{A_2} \to \cdots
\end{equation*}
with compatible natural transformations $\zeta_i \colon \mathsf{g}_{A_i} \to \mathsf{f}$: Assume we have constructed $A_i$ and $\zeta_i$ for $i \leq n$. Since $\mathsf{f}$ is locally finitely presented, there exists
\begin{equation*}
\mathsf{g}_B \to \ker(\mathsf{g}_{A_n} \to \mathsf{f})
\end{equation*}
that is surjective on $G$. This induces a natural transformation $\mathsf{g}_B \to \mathsf{g}_{A_n}$, which by the graded version of Yoneda's Lemma~\ref{GradedYonedasLemma} corresponds to a morphism $f \colon B \to A_n$. We complete this morphism to an exact triangle
\begin{equation*}
B \to A_n \to A_{n+1} \to \susp B
\end{equation*}
and apply $\mathsf{f}_0$, the degree 0 part of $\mathsf{f}$. By the graded version of Yoneda's Lemma~\ref{GradedYonedasLemma} we obtain the exact sequence
\begin{equation*}
\Nat(\mathsf{g}_B,\mathsf{f}) \leftarrow \Nat(\mathsf{g}_{A_n},\mathsf{f}) \leftarrow \Nat(\mathsf{g}_{A_{n+1}},\mathsf{f})\,.
\end{equation*}
Thus by construction of $B$ there exists a natural transformation $\zeta_{n+1}$ whose image is $\zeta_n$. 

By this construction we have
\begin{equation*}
\ker(\mathsf{g}_{A_n}(G) \to \mathsf{f}(G)) = \ker(\mathsf{g}_{A_n}(G) \to \mathsf{g}_{A_{n+1}}(G))\,.
\end{equation*}
Using this and that $\zeta_1(G)$ is surjective, it is straightforward to verify that the direct system is almost constant on $G$. Then the induced natural transformation $\colim_i \mathsf{g}_{A_i} \to \mathsf{f}$ is a natural isomorphism. By \cref{almostConstant}, the natural transformation
\begin{equation*}
\mathsf{g}_{A_{d+1}} \to \colim_i \mathsf{g}_{A_i} \xrightarrow{\sim} \mathsf{f}
\end{equation*}
is split surjective, and thus $\mathsf{f}$ is a retract of $\mathsf{g}_{A_{d+1}}$ on $\cat{T}$.

For the converse direction, we assume $\mathsf{f}$ is the retract of $\mathsf{g}_X$ for some object $X$. Then we have a canonical projection and a canonical injection
\begin{equation*}
\mathsf{g}_X \to \mathsf{f} \quad\text{and}\quad \mathsf{f} \to \mathsf{g}_X\,,
\end{equation*}
respectively. The canonical projection is surjective on $\cat{T}$, the canonical injection is injective. In particular, the canonical projection yields that $\mathsf{f}$ is locally finitely generated. Given a natural transformation $\mathsf{g}_Y \to \mathsf{f}$, its kernel coincides with the kernel of the composition $\mathsf{g}_Y \to \mathsf{f} \to \mathsf{g}_X$. By \cref{grddRpnLocfp}, any representable functor is locally finitely presented, and thus is $\mathsf{f}$. 
\end{proof}

\begin{corollary} \label{lfpIdemCompGrddRpn}
If $\cat{T}$ is additionally idempotent complete, then every locally finitely presented functor is graded representable. 
\end{corollary}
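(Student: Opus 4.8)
The plan is to feed \cref{lfpiffRetractGrddRpn} into the idempotent completeness of $\cat{T}$, using the graded Yoneda lemma to pass between idempotents of functors and idempotents of objects. First I would unwind what \cref{lfpiffRetractGrddRpn} gives: a locally finitely presented functor $\mathsf{f}$ is a retract of a graded representable functor, so fix an object $X$ and natural transformations $p \colon \mathsf{g}_X \to \mathsf{f}$ and $i \colon \mathsf{f} \to \mathsf{g}_X$ with $p \circ i = \id_\mathsf{f}$. Then $e \colonequals i \circ p$ is an idempotent endomorphism of $\mathsf{g}_X$ in the additive category of graded $R$-linear functors $\op{\cat{T}} \to \grMod{R}$, and $\mathsf{f}$ together with $p, i$ is one splitting of $e$.

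Next I would transport $e$ to $\cat{T}$. By \cref{GradedYonedasLemma} the map $\Nat(\mathsf{g}_X,\mathsf{g}_X) \to \mathsf{g}_X(X)_0 = \End_\cat{T}(X)$, $\eta \mapsto \eta(X)(\id_X)$, is a bijection; since $f \mapsto f_*$ is compatible with composition (as noted after \cref{GradedYonedasLemma}), this bijection is a ring isomorphism, so $e$ corresponds to an idempotent $\bar e \in \End_\cat{T}(X)$. Because $\cat{T}$ is idempotent complete, $\bar e$ splits: there are an object $Z$ and morphisms $j \colon Z \to X$, $q \colon X \to Z$ with $q \circ j = \id_Z$ and $j \circ q = \bar e$. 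Applying post-composition yields natural transformations $j_* \colon \mathsf{g}_Z \to \mathsf{g}_X$ and $q_* \colon \mathsf{g}_X \to \mathsf{g}_Z$ with $q_* \circ j_* = \id_{\mathsf{g}_Z}$ and $j_* \circ q_* = \bar e_* = e$. Hence $\mathsf{g}_Z$, together with $q_*, j_*$, is a second splitting of the same idempotent $e$ on $\mathsf{g}_X$, and this splitting is through a graded representable functor.

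Finally I would invoke the standard fact that splittings of an idempotent in an additive category are unique up to isomorphism: the composites $q_* \circ i \colon \mathsf{f} \to \mathsf{g}_Z$ and $p \circ j_* \colon \mathsf{g}_Z \to \mathsf{f}$ are mutually inverse, since $(p \circ j_*) \circ (q_* \circ i) = p \circ e \circ i = p \circ i \circ p \circ i = \id_\mathsf{f}$ and $(q_* \circ i) \circ (p \circ j_*) = q_* \circ e \circ j_* = q_* \circ j_* \circ q_* \circ j_* = \id_{\mathsf{g}_Z}$. Therefore $\mathsf{f} \cong \mathsf{g}_Z$ is graded representable. I do not expect a genuine obstacle here; the only points that need care are that the graded Yoneda bijection respects composition — so that an idempotent natural endotransformation of $\mathsf{g}_X$ really does come from an idempotent of $\End_\cat{T}(X)$ — and the elementary lemma that a split idempotent determines its image up to canonical isomorphism, both of which are routine.
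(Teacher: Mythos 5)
Your proposal is correct and follows essentially the same route as the paper: apply \cref{lfpiffRetractGrddRpn} to realize $\mathsf{f}$ as a retract of some $\mathsf{g}_X$, use the graded Yoneda lemma to transport the resulting idempotent of $\mathsf{g}_X$ to an idempotent of $X$, split it by idempotent completeness, and identify $\mathsf{f}$ with the representable functor of the resulting retract. Your version merely spells out the uniqueness-of-idempotent-splittings verification that the paper leaves implicit.
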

\begin{proof}
Let $\mathsf{f}$ be a locally finitely presented functor. By \cref{lfpiffRetractGrddRpn}, it is a retract of a graded representable functor $\mathsf{g}_X$. Then the natural transformation
\begin{equation*}
\mathsf{g}_X \to \mathsf{f} \to \mathsf{g}_X
\end{equation*}
corresponds to an idempotent $e \colon X \to X$. Since $\cat{T}$ is idempotent complete, there exists a retract of $Y$ of $X$, such that $e$ decomposes as the natural inclusion and projection morphism. Then $\mathsf{f} \to \mathsf{g}_X \to \mathsf{g}_Y$ is a natural isomorphism, and $\mathsf{f}$ is graded representable. 
\end{proof}

\begin{proof}[Proof of \cref{grddRep}]
Since $\cat{T}$ is Ext-finite, any graded representable functor is locally finite. For the converse, we assume $\mathsf{f}$ is locally finite. Since $R$ is noetherian and $\cat{T}$ Ext-finite, we can apply \cref{LocallyFiniteThenLFP} to obtain that $\mathsf{f}$ is locally finitely presented. Then $\mathsf{f}$ is graded representable by \cref{lfpIdemCompGrddRpn}. 
\end{proof}

\section{Applications} \label{sec:Applications}

\subsubsection*{Adjoint Functors}

As explained in \cite[Introduction]{Neeman:2001} there is a connection between representable functors and adjoint functors. In our context we obtain the following:

Let $R$ be a $\BZ$-graded graded-commutative ring. A functor $\mathsf{f} \colon \cat{S} \to \cat{T}$ between $R$-linear graded triangulated categories is \emph{graded $R$-linear}, if it is exact and the induced map
\begin{equation*}
\Ext{\cat{S}}{X}{Y} \to \Ext{\cat{T}}{\mathsf{f}(X)}{\mathsf{f}(Y)}
\end{equation*}
is a map of graded $R$-modules. 

\begin{lemma}
Let $R$ be a $\BZ$-graded graded-commutative ring, and $\cat{S}$, $\cat{T}$ graded $R$-linear triangulated categories. Suppose $\cat{T}$ is Ext-finite and every cohomological graded $R$-linear functor $\op{\cat{S}} \to \grMod{R}$, that is locally finite, is graded representable. Then every graded $R$-linear functor $\mathsf{f} \colon \cat{S} \to \cat{T}$ has a right adjoint.
\end{lemma}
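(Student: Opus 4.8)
The plan is the standard ``pointwise representability implies adjoint'' argument, transported to the graded setting. For a fixed object $T$ in $\cat{T}$, consider
\begin{equation*}
\mathsf{h}_T \colon \op{\cat{S}} \to \grMod{R}\,, \qquad X \mapsto \Ext{\cat{T}}{\mathsf{f}(X)}{T}\,.
\end{equation*}
First I would check that $\mathsf{h}_T$ is a cohomological graded $R$-linear functor: it is the composite of $\mathsf{f}$ — which is exact, so commutes with $\susp$, and graded $R$-linear — with $\Ext{\cat{T}}{-}{T}$, which is cohomological and graded $R$-linear because $\cat{T}$ is a graded $R$-linear triangulated category. Hence $\mathsf{h}_T(\susp^n X) = \Ext{\cat{T}}{\susp^n \mathsf{f}(X)}{T} = \mathsf{h}_T(X)[-n]$, and the induced maps on Ext are maps of graded $R$-modules since those induced by $\mathsf{f}$ are and composition in $\cat{T}$ is $R$-bilinear. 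Moreover $\mathsf{h}_T$ is locally finite: for every $X$ the module $\mathsf{h}_T(X) = \Ext{\cat{T}}{\mathsf{f}(X)}{T}$ lies in $\grmod{R}$ because $\cat{T}$ is Ext-finite.

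By the hypothesis on $\cat{S}$, the functor $\mathsf{h}_T$ is therefore graded representable: there is an object $\mathsf{g}(T)$ in $\cat{S}$ together with a natural isomorphism
\begin{equation*}
\theta_T \colon \Ext{\cat{S}}{-}{\mathsf{g}(T)} \xrightarrow{\ \sim\ } \Ext{\cat{T}}{\mathsf{f}(-)}{T}\,.
\end{equation*}
Taking the degree $0$ part yields a bijection $\Hom{\cat{S}}{X}{\mathsf{g}(T)} \cong \Hom{\cat{T}}{\mathsf{f}(X)}{T}$ natural in $X$; this is the sought adjunction bijection. Next I would upgrade $T \mapsto \mathsf{g}(T)$ to a functor: a morphism $t \colon T \to T'$ induces, by post-composition, a natural transformation $\mathsf{h}_T \to \mathsf{h}_{T'}$, hence (conjugating by $\theta_T$ and $\theta_{T'}$) a natural transformation $\Ext{\cat{S}}{-}{\mathsf{g}(T)} \to \Ext{\cat{S}}{-}{\mathsf{g}(T')}$, which by Yoneda's \cref{GradedYonedasLemma} is $\mathsf{g}(t)_*$ for a unique morphism $\mathsf{g}(t) \colon \mathsf{g}(T) \to \mathsf{g}(T')$. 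Functoriality of $\mathsf{g}$ and naturality of $\theta_{(-)}$ in $T$ then drop out of the uniqueness clause of Yoneda's lemma.

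Combining naturality of $\theta_T$ in $X$ with naturality in $T$ gives that $\Hom{\cat{S}}{X}{\mathsf{g}(T)} \cong \Hom{\cat{T}}{\mathsf{f}(X)}{T}$ is bifunctorial, that is, $\mathsf{g}$ is right adjoint to $\mathsf{f}$, which completes the proof. The only genuinely non-formal ingredient is the representability hypothesis; the rest is routine parametrised-Yoneda bookkeeping, and I expect the one point needing care to be the verification that $\mathsf{h}_T$ really is a locally finite cohomological graded $R$-linear functor, so that the hypothesis on $\cat{S}$ applies to it.
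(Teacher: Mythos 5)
Your proposal is correct and follows essentially the same route as the paper: define $\mathsf{h}_T = \Ext{\cat{T}}{\mathsf{f}(-)}{T}$, observe it is cohomological, graded $R$-linear and locally finite by Ext-finiteness of $\cat{T}$, invoke the representability hypothesis, and assemble the representing objects into a right adjoint via Yoneda. Your write-up is, if anything, slightly more careful than the paper's on why $\mathsf{h}_T$ is cohomological and on extracting the adjunction bijection from the degree-zero part.
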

\begin{proof}
We adapt the proof of \cite[Theorem~8.4.4]{Neeman:2001}. Given $Y \in \cat{T}$ we define a functor $\mathsf{h} \colon \cat{S} \to \grMod{R}$ by
\begin{equation*}
\mathsf{h}(-) \colonequals \Ext{\cat{T}}{\mathsf{f}(-)}{Y}\,.
\end{equation*}
This is a graded $R$-linear functor. Since $\cat{T}$ is Ext-finite, this functor is locally finite. So by assumption $\cat{h}$ is graded representable, that is there exists an object $\mathsf{f}'(Y) \in \cat{S}$, such that
\begin{equation*}
\Ext{\cat{T}}{\mathsf{f}(-)}{Y} \cong \Ext{\cat{S}}{-}{\mathsf{f}'(Y)}\,.
\end{equation*}
It remains to verify that $\mathsf{f}'$ is a functor and this isomorphism is natural in both components. Let $f \colon Y \to Z$ be a morphism in $\cat{T}$. Then the induced map
\begin{equation*}
\Ext{\cat{S}}{-}{\mathsf{f}'(Y)} \to \Ext{\cat{S}}{-}{\mathsf{f}'(Z)}
\end{equation*}
corresponds to a morphism $\mathsf{f}'(Y) \to \mathsf{f}'(Z)$ by Yoneda's \cref{GradedYonedasLemma}. Thus $\mathsf{f}'$ is a functor. The above isomorphism is natural by construction. So $\mathsf{f}'$ is a right adjoint of $\mathsf{f}$. 
\end{proof}

\begin{corollary}
Let $R$ be a $\BZ$-graded graded-commutative noetherian ring and $\cat{S}$, $\cat{T}$ Ext-finite graded $R$-linear triangulated categories. Suppose $\cat{S}$ is strongly generated and idempotent complete. Then every graded $R$-linear functor $\mathsf{f} \colon \cat{S} \to \cat{T}$ has a right adjoint. \qed
\end{corollary}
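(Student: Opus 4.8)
The plan is to obtain this corollary as a formal consequence of \cref{grddRep} together with the preceding Lemma, so essentially no new work is required. First I would match hypotheses: the standing assumptions here — $R$ a $\BZ$-graded graded-commutative noetherian ring and $\cat{S}$ a strongly generated, Ext-finite, idempotent complete graded $R$-linear triangulated category — are exactly those of \cref{grddRep} with $\cat{S}$ playing the role of $\cat{T}$ there. Invoking \cref{grddRep}, I conclude that a cohomological graded $R$-linear functor $\op{\cat{S}} \to \grMod{R}$ is graded representable as soon as it is locally finite.

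Next I would feed this into the preceding Lemma. Its hypotheses are then met: $R$ is $\BZ$-graded graded-commutative, $\cat{S}$ and $\cat{T}$ are graded $R$-linear triangulated categories, $\cat{T}$ is Ext-finite, and every locally finite cohomological graded $R$-linear functor $\op{\cat{S}} \to \grMod{R}$ is graded representable by the previous step. The Lemma then produces a right adjoint for every graded $R$-linear functor $\mathsf{f} \colon \cat{S} \to \cat{T}$, which is precisely the claim. Note that the two Ext-finiteness assumptions in the statement serve different purposes: that of $\cat{S}$ is used to apply \cref{grddRep}, while that of $\cat{T}$ is used to apply the Lemma; likewise, the noetherian hypothesis on $R$ enters only through \cref{grddRep} and is not needed by the Lemma.

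I do not anticipate any genuine obstacle; the only points to check are bookkeeping, namely that "graded $R$-linear triangulated category" is used consistently in the two statements and that dropping the unused noetherian hypothesis in the Lemma is harmless. Were one to unfold the argument rather than cite it, the substantive content sits in the proof of the preceding Lemma: for $Y \in \cat{T}$, verifying that $\mathsf{h}(-) \colonequals \Ext{\cat{T}}{\mathsf{f}(-)}{Y}$ is a cohomological graded $R$-linear functor on $\op{\cat{S}}$ taking values in $\grmod{R}$ (using exactness and $R$-linearity of $\mathsf{f}$ together with Ext-finiteness of $\cat{T}$), applying graded representability to obtain $\mathsf{f}'(Y)$, and promoting $Y \mapsto \mathsf{f}'(Y)$ to a functor with the asserted adjunction via the graded Yoneda \cref{GradedYonedasLemma} — all of which is already carried out above.
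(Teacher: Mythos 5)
Your proposal is correct and matches the paper's intended argument exactly: the corollary is stated with an immediate \qed precisely because it follows by applying \cref{grddRep} to $\cat{S}$ (which is strongly generated, Ext-finite and idempotent complete) to verify the representability hypothesis of the preceding Lemma, whose conclusion is then the claim. Your remarks on which hypothesis feeds which step are accurate bookkeeping and nothing further is needed.
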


\subsubsection*{Derived Category}

Let $R$ be a commutative noetherian ring and $A$ an $R$-algebra that is finitely generated as an $R$-module. Then $A$ is noetherian; see for example \cite[Theorem~3.7]{Matsumura:1989}. The bounded derived category of finitely generated modules over $A$, denoted by $\dbcat{\mod{A}}$, has a canonical structure as an $R$-linear category, and the $R$-module $\Hom{\dbcat{\mod{A}}}{X}{Y} = \Ext[0]{R}{X}{Y}$ is finitely generated for any $X,Y$. In general, the category $\dbcat{\mod{A}}$ need not be Ext-finite as an $R$-linear category. By \cite[Corollary~2.10]{Balmer/Schlichting:2001}, the category $\dbcat{\mod{A}}$ is idempotent complete. 

\begin{discussion}
In general the question whether $\dbcat{\mod{A}}$ is strongly generated is rather difficult. When $A$ is artinian, then $\dbcat{\mod{A}}$ is strongly generated by \cite[Proposition~7.37]{Rouquier:2008}. When $A = R$ is a commutative notherian ring, then $\dbcat{\mod{R}}$ is strongly generated when $R$ is either essentially of finite type over a field or over an equicharacteristic excellent local ring; see \cite[Main Theorem]{Aihara/Takahashi:2015} and \cite[Corollary~7.2]{Iyengar/Takahashi:2016}.
\end{discussion}

In the following we discuss two examples in which $\dbcat{\mod{A}}$ is Ext-finite for some cohomology ring connected to $A$. 

\subsubsection*{Finite Group over a Commutative Ring}

We consider $A = RG$, the group algebra of a finite group $G$.

\begin{discussion} \label{Grp:Linearity}
The group cohomology of the group algebra $RG$ with coefficients in an $RG$-complex $M$ is
\begin{equation*}
\coh^*(G,M) \colonequals \Ext[*]{RG}{R}{M}\,.
\end{equation*}
When $M = R$ this is a $\BZ$-graded graded-commutative ring, and every $\coh^*(G,M)$ is a graded $\coh^*(G,R)$-module. In particular, for $RG$-complexes $X$, $Y$ the identification
\begin{equation*}
\Ext{\dbcat{\mod{RG}}}{X}{Y} = \Ext{RG}{X}{Y} \cong \coh^*(G,\Hom{R}{X}{Y})
\end{equation*}
holds and the cohomology ring $\coh^*(G,R)$ acts on any Ext-module; see for example \cite[Proposition~3.1.8]{Benson:1998}. So the bounded derived category of finitely generated $RG$-modules $\dbcat{\mod{RG}}$ is graded $\coh^*(G,R)$-linear.
\end{discussion}

\begin{discussion} \label{Grp:FiniteGend}
By \cite{Venkov:1959,Evens:1961}, the group cohomology ring $\coh^*(G,R)$ is noetherian, and $\coh^*(G,M)$ is finitely generated over $\coh^*(G,R)$ for every finitely generated $RG$-module $M$. In particular, the derived category $\dbcat{\mod{RG}}$ is Ext-finite as a graded $\coh^*(G,R)$-linear triangulated category. 
\end{discussion}

\begin{corollary} \label{Grp:Representability}
Let $R$ be a commutative notherian ring and $G$ a finite group. If $\dbcat{\mod{RG}}$ is strongly generated, then a graded $\coh^*(G,R)$-linear functor 
\begin{equation*}
\mathsf{f} \colon \dbcat{\mod{RG}} \to \grMod{\coh^*(G,R)}
\end{equation*}
is graded representable if and only if $\mathsf{f}$ is locally finite. \qed
\end{corollary}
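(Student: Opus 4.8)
The plan is to obtain this as an immediate application of \cref{grddRep}, taking $\cat{T} = \dbcat{\mod{RG}}$ and the graded ring to be $\coh^*(G,R)$. The proof then amounts to verifying the hypotheses of that theorem, and all but one of them have already been recorded above.

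First I would observe that $RG$ is noetherian, since $R$ is noetherian and $RG$ is finitely generated as an $R$-module (see \cite[Theorem~3.7]{Matsumura:1989}), so that $\dbcat{\mod{RG}}$ is defined; by \cref{Grp:Linearity} it is a graded $\coh^*(G,R)$-linear triangulated category. Next I would invoke the Venkov--Evens theorem, recalled in \cref{Grp:FiniteGend}: the ring $\coh^*(G,R)$ is $\BZ$-graded, graded-commutative and noetherian, and $\coh^*(G,M)$ is a finitely generated $\coh^*(G,R)$-module for every finitely generated $RG$-module $M$. Since every object of $\dbcat{\mod{RG}}$ is finitely built from such modules, this gives that $\dbcat{\mod{RG}}$ is Ext-finite over $\coh^*(G,R)$. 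It is idempotent complete by \cite[Corollary~2.10]{Balmer/Schlichting:2001}. The one hypothesis not available for free, strong generation of $\dbcat{\mod{RG}}$, is exactly the standing assumption of the corollary.

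With all hypotheses in hand, \cref{grddRep} then yields directly that a cohomological graded $\coh^*(G,R)$-linear functor $\mathsf{f} \colon \op{\cat{T}} \to \grMod{\coh^*(G,R)}$ is graded representable if and only if it is locally finite, which is the assertion. I do not expect a genuine obstacle here beyond assembling the cited inputs; the only subtlety is that Ext-finiteness must be taken relative to $\coh^*(G,R)$ and not to $R$ --- over $R$ itself the category $\dbcat{\mod{RG}}$ is generally not Ext-finite --- and it is precisely the finite generation furnished by Venkov--Evens that makes \cref{grddRep} applicable.
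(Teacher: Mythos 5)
Your proposal is correct and follows exactly the route the paper intends: the corollary is a direct application of \cref{grddRep} with the hypotheses supplied by \cref{Grp:Linearity}, \cref{Grp:FiniteGend}, and the idempotent completeness of $\dbcat{\mod{RG}}$ from \cite[Corollary~2.10]{Balmer/Schlichting:2001}, which is why the paper states it with no separate proof. Your added remarks --- passing from finitely generated modules to all complexes via finite building, and the warning that Ext-finiteness is relative to $\coh^*(G,R)$ rather than $R$ --- are accurate and match the paper's surrounding discussion.
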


\subsubsection*{Regular ring modulo a regular sequence} 

We consider $R = A$ a commutative noetherian ring. 

\begin{discussion}
The category $\dbcat{\mod{R}}$ is Ext-finite over $R$, if and only if the Ext-modules $\Ext{R}{X}{Y}$ are bounded for all $X$ and $Y$ in $\dbcat{\mod{R}}$. That is precisely when $R$ is regular: When $R$ is regular the Ext-modules are bounded by definition. For the converse, for every $X$ in $\dbcat{\mod{R}}$ the Ext-module $\Ext{R}{X}{R/\mathfrak{p}}$ is bounded and $X_\mathfrak{p}$ has finite projective dimension for any prime ideal $\mathfrak{p}$ of $R$. Then $X$ has finite projective dimension; see \cite[Lemma~4.5]{Bass/Murthy:1967} for modules, and \cite[Theorem~4.1]{Avramov/Iyengar/Lipman:2010} and \cite[Theorem~3.6]{Letz:2021} for complexes. 

When $R$ is regular, the bounded derived category $\dbcat{\mod{R}}$ is strongly generated if and only if $R$ is a strong generator. The later holds precisely when $R$ has finite global dimension, that is $R$ has finite Krull dimension. Then Rouquier's representability theorem \cite[Corollary~4.18]{Rouquier:2008} applies. 
\end{discussion}

\begin{discussion}
Suppose $R = Q/(\bmf)$ is the quotient of a regular ring $Q$ by a regular sequence $\bmf = f_1, \ldots, f_c$. Then there exist cohomological operators $\bmchi = \chi_1, \ldots, \chi_c$ in degree 2, such that for $X,Y$ in $\dbcat{\mod{R}}$ the graded modules $\Ext{R}{X}{Y}$ are finitely generated over the noetherian graded ring $R[\bmchi]$; see \cite[Theorem~(4.2)]{Avramov/Gasharov/Peeva:1997}. In particular, the category $\dbcat{\mod{R}}$ is $R[\bmchi]$-linear and Ext-finite. 

The ring of cohomological operators coincides with the Hochschild cohomology
\begin{equation*}
R[\bmchi] \cong \hh{R/Q} \colonequals \Ext{R \lotimes_Q R}{R}{R}\,;
\end{equation*}
see \cite[Section~3]{Avramov/Buchweitz:2000a}.
\end{discussion}

\begin{corollary} \label{CI:Representability}
Let $R = Q/(\bmf)$ be the quotient of a regular ring $Q$ by a regular sequence $\bmf = f_1, \ldots, f_c$ with cohomological operators $\bmchi$. If $\dbcat{\mod{R}}$ is strongly generated, then any graded $R[\bmchi]$-linear functor $\mathsf{f} \colon \dbcat{\mod{R}} \to \grMod{R[\bmchi]}$ is graded representable if and only if $\mathsf{f}$ is locally finite. \qed
\end{corollary}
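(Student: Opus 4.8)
The plan is to obtain this as a direct instance of \cref{grddRep}, applied to the triangulated category $\cat{T} = \dbcat{\mod{R}}$ equipped with the graded ring $R[\bmchi]$. So I would check the four hypotheses of that theorem one at a time, drawing each from the discussions above.

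First, $R[\bmchi]$ is a $\BZ$-graded graded-commutative noetherian ring: it is the polynomial ring over the commutative noetherian ring $R$ on the variables $\chi_1, \dots, \chi_c$, each placed in the even degree $2$, hence graded-commutative, and noetherian by the criterion recalled above since $(R[\bmchi])_0 = R$ is noetherian and $R[\bmchi]$ is a finitely generated $R$-algebra. The identification $R[\bmchi] \cong \hh{R/Q}$ is \cite[Section~3]{Avramov/Buchweitz:2000a} and is not needed for the argument, only for the phrasing. Second, $\dbcat{\mod{R}}$ is graded $R[\bmchi]$-linear and Ext-finite: by \cite[Theorem~(4.2)]{Avramov/Gasharov/Peeva:1997} the cohomological operators $\bmchi$ act on $\Ext{R}{X}{Y}$, making each such group a graded $R[\bmchi]$-module compatibly with composition (equivalently, supplying the structure map $R[\bmchi] \to \ctr(\dbcat{\mod{R}})$ of the discussion above), and this module is finitely generated over $R[\bmchi]$ for all $X, Y \in \dbcat{\mod{R}}$.

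Third, $\dbcat{\mod{R}}$ is idempotent complete by \cite[Corollary~2.10]{Balmer/Schlichting:2001}; and fourth, it is strongly generated by the hypothesis of the corollary. With these four facts assembled, \cref{grddRep} applies verbatim: a cohomological graded $R[\bmchi]$-linear functor $\mathsf{f} \colon \dbcat{\mod{R}} \to \grMod{R[\bmchi]}$ is graded representable if and only if it is locally finite. (The ``only if'' direction already uses only Ext-finiteness — every graded representable functor is locally finite — while ``if'' uses the full strength of \cref{grddRep} via \cref{LocallyFiniteThenLFP} and \cref{lfpIdemCompGrddRpn}.)

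I do not expect a genuine obstacle here: the entire content of the corollary lies in importing the structural results of \cite{Avramov/Gasharov/Peeva:1997,Avramov/Buchweitz:2000a}, which exhibit $R[\bmchi]$ as a noetherian graded ring over which $\dbcat{\mod{R}}$ is Ext-finite. Once that is in place, together with idempotent completeness and the assumed strong generation, the representability statement is immediate from \cref{grddRep}.
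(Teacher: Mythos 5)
Your proposal is correct and matches the paper's (implicit) proof exactly: the paper marks this corollary with \qed because the preceding discussions already supply all four hypotheses of \cref{grddRep} — noetherianity of $R[\bmchi]$, Ext-finiteness and graded linearity via \cite[Theorem~(4.2)]{Avramov/Gasharov/Peeva:1997}, idempotent completeness via \cite[Corollary~2.10]{Balmer/Schlichting:2001}, and strong generation by hypothesis. Your only addition is the (correct) observation that the functor should be cohomological for \cref{grddRep} to apply, a word the corollary's statement omits.
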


\begin{discussion}
For \cref{Grp:Representability,CI:Representability} it is crucial that the ring action on the derived category is graded, since the Ext-modules need not be not bounded. In particular, \cref{Grp:Representability,CI:Representability} are not consequences of \cite[4.3]{Rouquier:2008}, but require \cref{grddRep}. 
\end{discussion}

\bibliographystyle{amsplain}
\bibliography{References}

\end{document}